\newtheorem*{maintheorem*}{Main Theorem}
\newtheorem{theorem}{Theorem}[section]
\newtheorem{lemma}[theorem]{Lemma}
\newtheorem{cor}[theorem]{Corollary}
\theoremstyle{definition}
\newtheorem{definition}[theorem]{Definition}
\newtheorem{example}[theorem]{Example}
\numberwithin{equation}{section}
\newcommand{\supp}{\mathsf{supp}}
\keywords{Goldbach conjecture, Laurent polynomials, polynomials}
\subjclass[2010]{11P32, 16Y60}
\begin{document}
	
	\mbox{}
	\title{A Goldbach Theorem for Laurent Polynomials with Positive Integer Coefficients}
	
	\author{Sophia Liao}
	\address{Proof School, 973 Mission St., San Francisco, CA 94103}
	\email{sophialiao@proofschool.org}
	
	\author{Harold Polo}
	\address{Department of Mathematics\\University of California, Irvine\\Irvine, CA 92697}
	\email{harold.polo@uci.edu}
	
\date{\today}

\begin{abstract}
	We establish an analogue of the Goldbach conjecture for Laurent polynomials with positive integer coefficients.
\end{abstract}
\medskip

\maketitle


\bigskip
\section{Introduction}
\label{sec:intro}

The celebrated Goldbach conjecture has stood as one of the most intriguing unsolved problems in mathematics. In modern terms, the conjecture asserts that every even integer strictly greater than $2$ can be written as the sum of two prime numbers. Over the years, many mathematicians have devoted their efforts to exploring the intricacies of the Goldbach conjecture~\cite{chen, helfgott, vinogradov}. 

The scope of investigations surrounding the Goldbach conjecture has expanded beyond its original formulation, inspiring researchers to explore analogous ideas for classes of polynomial rings~\cite{hayes,pollack,saidak}. Hayes~\cite{hayes} found representations for every polynomial $f \in \mathbb{Z}[x]$ with $\deg(f) \geq 1$ as the sum of two irreducible polynomials. Building upon this, Pollack~\cite{pollack} extended the result to Noetherian domains with infinitely many maximal ideals, while Saidak~\cite{saidak} studied the number of representations of a polynomial with integer coefficients as the sum of two irreducibles. In the context of polynomials over finite fields of characteristic $2$, Effinger and Hayes~\cite{effinger} introduced a notion of parity and demonstrated that most odd monic polynomials can be expressed as the sum of three irreducibles.

In what follows, we provide an analogue of the Goldbach conjecture for Laurent polynomials with positive integer coefficients. First, we introduce a couple of key definitions. Throughout this paper, we let $\mathbb{N}_0[x^{\pm 1}]$ represent the set consisting of all Laurent polynomials with positive integer coefficients. On the other hand, given a polynomial $f = \sum_{i = 0}^{n} c_i x^{k_i} \in \mathbb{N}_0[x^{\pm 1}]$, we refer to the set $\{k_i \mid i \in \llbracket 0,n \rrbracket\}$ as the \emph{support} of $f$, which we denote by $\supp(f)$.  The arithmetic properties of $\mathbb{N}_0[x^{\pm 1}]$ (and other classes of polynomial semirings) have generated much interest lately~\cite{CF19,chapmanpolo, gotti}. We are now ready to introduce a counterpart of the Goldbach conjecture for polynomials in $\mathbb{N}_0[x^{\pm 1}]$.

\smallskip
\begin{theorem} \label{theorem: our main result}
	Every polynomial $f \in \mathbb{N}_0[x^{\pm 1}]$ can be written as the sum of two irreducibles provided that $f(1) > 3$ and $|\supp(f)| > 1$.
\end{theorem}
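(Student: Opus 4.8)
The plan is to exploit two independent \say{size} functions on $\mathbb{N}_0[x^{\pm 1}]$ that detect nontrivial factorizations. The first is evaluation at $1$: the assignment $f \mapsto f(1)$ is multiplicative, it takes values in the positive integers, and it sends a polynomial to $1$ exactly when that polynomial is a monomial $x^k$, i.e.\ a unit. Consequently, if $f(1)$ is prime then in any factorization $f = gh$ one has $g(1)h(1) = f(1)$, forcing $g(1) = 1$ or $h(1) = 1$, so $f$ is irreducible. The second function is the support: since all coefficients are positive there is no cancellation, so $\supp(gh) = \supp(g) + \supp(h)$ and hence $|\supp(gh)| \geq |\supp(g)| + |\supp(h)| - 1$. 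From this I would deduce that every primitive binomial (a two-term Laurent polynomial whose two coefficients are coprime) is irreducible: the cardinality bound forces one factor to be a monomial, and coprimality of the coefficients forces that monomial to be a unit. The key idea of the proof is to write $f = g + h$ realizing $g$ and $h$ as irreducibles via these two criteria \emph{separately}, so as never to require that both summands have prime value.

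First I would normalize by multiplying by the unit $x^{-\min \supp(f)}$, so that $f \in \mathbb{N}_0[x]$ has nonzero constant term and positive degree. Writing $f = \sum_i c_i x^{k_i}$ with $f(1) = N > 3$, a decomposition $f = g + h$ is a coefficientwise split $c_i = a_i + b_i$ with $a_i, b_i \geq 0$; then $g(1) + h(1) = N$, and $g(1)$ may be prescribed to be any integer in $[0,N]$. The guiding construction is to peel off an irreducible binomial $g = x^{k_i} + m\,x^{k_j}$ with $1 \le m \le c_j$ (automatically primitive, hence irreducible) and to choose $m$ so that $h(1) = N - 1 - m$ is prime. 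As $m$ ranges over $[1,c_j]$ the value $h(1)$ sweeps the interval $[\,N-1-c_j,\ N-2\,]$, so it suffices that this interval contain a prime; by Bertrand's postulate and standard bounds on prime gaps this holds as soon as some coefficient is large enough relative to $N$, and one can widen the reachable range by instead letting $g$ be a prime-valued polynomial supported on several exponents while $h$ stays binomial.

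The main obstacle is the opposite regime, in which every coefficient is small, the extreme case being $f$ with all coefficients equal to $1$. There each monomial must go entirely into $g$ or into $h$, so a decomposition is exactly a partition of $\supp(f)$ into $S \sqcup T$ with $g(1) = |S|$ and $h(1) = |T|$ the block sizes; requiring both sizes prime is precisely the assertion that $N$ is a sum of two primes, which can genuinely fail (for instance $N = 27$). This is the point at which the argument must avoid collapsing into the Goldbach conjecture itself, and I would do so by using the support criterion for one block rather than the value criterion for both: choose $S$ of prime size $p$ (irreducible, since its value $p$ is prime) and choose the complementary block $T$ so that $\supp(T)$ is \emph{not} a nontrivial sumset, making $T$ irreducible by the cardinality bound. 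Since $|\supp(f)| = N$ is large in this regime, there is ample freedom to pick a small complementary block (a binomial, or a four-element set that is not a translate of any $A+B$) avoiding every sumset pattern; this reduces the case to a purely combinatorial statement about finite sets of integers.

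Finally I would assemble the cases: when $f$ has a sufficiently large coefficient the binomial-peeling construction applies, when all coefficients are uniformly bounded the partition/support argument applies, and intermediate profiles are handled by interpolating (peel a binomial of tunable value and complete its complement to either a prime-valued or a non-sumset piece). Here $f(1) > 3$ is used to ensure both summands can be taken with value at least $2$, hence to be genuine non-units, and $|\supp(f)| > 1$ supplies both the two distinct exponents needed to form the initial binomial and the combinatorial room in the all-ones case. I expect the delicate step to be making the supply of reachable prime values and of non-sumset selections uniform across all coefficient profiles, so that no tight residual small cases remain where prime gaps or accidental sumset coincidences could obstruct the split.
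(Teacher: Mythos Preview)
Your high-level strategy matches the paper's: make one summand irreducible via the \say{prime value} criterion ($g(1)\in\mathbb{P}$) and the other via the \say{non-sumset support} criterion (what the paper calls \emph{monolithic}, together with coprime coefficients). Where the proposal stops short is in supplying a class of monolithic polynomials flexible enough to carry the mass-transfer step uniformly. Your primitive binomials and hand-picked four-element non-sumsets are rigid: once chosen, their value is essentially pinned down, so you are pushed back into a Goldbach-type constraint that $N$ minus some specific small constant be prime, and your \say{interpolation} paragraph is a hope rather than an argument. Concretely, take $f = 2\sum_{i=0}^{n} x^{k_i}$ with $\supp(f)$ itself a nontrivial sumset. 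Peeling a primitive binomial $h$ forces $h(1)\in\{2,3\}$, so you would need $f(1)-2$ or $f(1)-3$ to be prime; your all-ones partition argument does not apply, since splitting the support whole leaves both pieces with every coefficient equal to $2$ and hence imprimitive; and the complement of any partial peel still has support equal to $\supp(f)$, which is a sumset by hypothesis. Nothing in the sketch explains how to escape this.

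The paper closes exactly this gap with the notion of a \emph{hyper-monolithic} polynomial: one whose strictly smallest consecutive-exponent gap sits at one end of the support. Such a polynomial is automatically monolithic, and---this is the point---it \emph{remains} hyper-monolithic after deleting up to $g(1)-2$ unit monomials, so one may siphon mass from it into the other summand until that summand's value hits a prescribed prime, while the donor stays irreducible throughout. The technical core (Lemma~\ref{lemma: splitting configuration} and Lemma~\ref{lemma: f as a sum of a monolithic plus something with smaller size}) is a combinatorial argument on the gap sequence $\Delta_i = k_i - k_{i+1}$ showing that every $f$ with $|\supp(f)|\geq 3$ splits as $g+h$ with $g$ hyper-monolithic and $h(1)\leq g(1)$; Nagura's refinement of Bertrand then supplies a prime in $[\tfrac{5}{6}f(1)-1,\,f(1)-2]$, which is precisely the range the siphoning can reach. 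This unit-stable family of monolithic polynomials is the missing ingredient that turns your outline into a proof.
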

\smallskip

The statement of Theorem~\ref{theorem: our main result} is intimately related to the Goldbach conjecture. Observe that, since $\mathbb{N}_0[x^{\pm 1}]^{\times} = \{x^k \mid k \in \mathbb{Z}\}$, we can think of a polynomial in $\mathbb{N}_0[x^{\pm 1}]$ as a formal sum of (multiplicative) units. This implies that, as in the Goldbach conjecture, proving Theorem~\ref{theorem: our main result} involves partitioning a fixed set of units into two subsets, each one of which represents an irreducible Laurent polynomial with positive integer coefficients. Furthermore, if the Goldbach conjecture were true then, using the fact that $f \in \mathbb{N}_0[x^{\pm 1}]$ is irreducible when $f(1)$ is a prime number (see Lemma~\ref{lemma: f(1) is prime} below), we could easily show that Theorem~\ref{theorem: our main result} holds for polynomials $f \in \mathbb{N}_0[x^{\pm 1}]$ satisfying that $f(1)$ is an even number strictly greater than $2$. 

But why do we not consider a statement similar to Theorem~\ref{theorem: our main result} in the context of $\mathbb{N}_0[x]$? It is worth mentioning that such a statement does not hold as, for any $k \geq 2$, the polynomial $x^{k} + \cdots + x$ cannot be expressed as the sum of two irreducibles.

\section{Proof of Theorem~\ref{theorem: our main result}.}

Hold your horses! Before we jump into the excitement of proving Theorem~\ref{theorem: our main result}, let us handle the not-so-thrilling task of setting up some notation. Throughout the proof, we let $\mathbb{Z}, \mathbb{N}$, $\mathbb{N}_0$, and $\mathbb{P}$ denote the set of integers, positive integers, nonnegative integers, and prime numbers, respectively. Moreover, for $m,n \in \mathbb{N}_0$, we set
\[
\llbracket m,n \rrbracket \coloneqq \left\{k \in \mathbb{N}_0 \mid m \leq k \leq n\right\}.
\]
Observe that if $m > n$, then $\llbracket m,n \rrbracket = \emptyset$. To avoid pedantic repetitions, whenever we consider a polynomial $\sum_{i = 0}^{n} c_i x^{k_i}$ in $\mathbb{N}_0[x^{\pm 1}]$, we assume that $k_0 > \cdots > k_n$ and $c_0, \ldots, c_n \in \mathbb{N}$, unless we specify otherwise. Now, giddy up!

Our first goal is to identify enough irreducibles within $\mathbb{N}_0[x^{\pm 1}]$. Hayes~\cite{hayes} proved a Goldbach conjecture analogue for polynomials in $\mathbb{Z}[x]$ by cleverly manipulating the coefficients of a polynomial $f \in \mathbb{Z}[x]$. Therefore, examining the coefficients of a polynomial appears to be a natural starting point.

\begin{lemma} \label{lemma: f(1) is prime}
	Let $f \in \mathbb{N}_0[x^{\pm 1}]$ such that $f(1)$ is a prime number. Then $f$ is irreducible.
\end{lemma}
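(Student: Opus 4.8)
The plan is to push everything through the evaluation homomorphism at $x = 1$. For a polynomial $f = \sum_{i=0}^{n} c_i x^{k_i} \in \mathbb{N}_0[x^{\pm 1}]$, the value $f(1) = \sum_{i=0}^{n} c_i$ is a positive integer, and the assignment $f \mapsto f(1)$ is a homomorphism of semirings $\mathbb{N}_0[x^{\pm 1}] \to \mathbb{N}$; in particular it is multiplicative. The crucial observation that makes this map useful is that, because all coefficients are \emph{positive} integers, the fiber over $1 \in \mathbb{N}$ consists exactly of the units: indeed, if $g = \sum_j a_j x^{m_j}$ with every $a_j \in \mathbb{N}$ satisfies $\sum_j a_j = g(1) = 1$, then $g$ must consist of a single term with coefficient $1$, i.e.\ $g = x^{m}$ for some $m \in \mathbb{Z}$, which lies in $\mathbb{N}_0[x^{\pm 1}]^{\times} = \{x^k \mid k \in \mathbb{Z}\}$.

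With this in hand, the argument is short. First I would note that $f$ is not a unit: since $f(1)$ is prime we have $f(1) \geq 2 \neq 1$, so $f \neq x^k$ for every $k \in \mathbb{Z}$. Next, to verify irreducibility I would take an arbitrary factorization $f = gh$ with $g, h \in \mathbb{N}_0[x^{\pm 1}]$ and apply evaluation at $1$ to obtain $f(1) = g(1)\,h(1)$ in $\mathbb{N}$. Primality of $f(1)$ forces one of the factors, say $g(1)$, to equal $1$; by the fiber computation above, $g$ is then a unit. Hence every factorization of $f$ is trivial, and since $f$ itself is not a unit, $f$ is irreducible.

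I do not expect a genuine obstacle. The only point that requires any verification is the identification of the fiber over $1$ with the group of units, and this is precisely where the positivity of the coefficients is essential (and where the analogous statement for $\mathbb{Z}[x]$, with possibly cancelling coefficients, would fail). The remainder is a direct transfer of the primality of $f(1)$ into the semiring-theoretic notion of irreducibility via the multiplicativity of $f \mapsto f(1)$.
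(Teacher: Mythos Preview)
Your proof is correct and follows essentially the same approach as the paper: both use multiplicativity of evaluation at $1$ to reduce a factorization $f=gh$ to $f(1)=g(1)h(1)$, invoke primality to force (say) $g(1)=1$, and conclude that $g$ is a unit because the only element of $\mathbb{N}_0[x^{\pm 1}]$ with coefficient sum $1$ is a power of $x$. You additionally spell out that $f$ itself is not a unit and justify the fiber-over-$1$ computation, both of which the paper leaves implicit.
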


\begin{proof}
	Write $f = gh$ with $g,h \in \mathbb{N}_0[x^{\pm 1}]$. Since $f(1) = g(1)h(1)$ and $f(1)$ is a prime number, we may assume that $g(1) = 1$. Hence the element $g$ is a unit of $\mathbb{N}_0[x^{\pm 1}]$. We can then conclude that $f$ is irreducible.
\end{proof} 

\begin{cor} \label{cor: goldbach for numbers}
	Let $f \in \mathbb{N}_0[x^{\pm 1}]$ such that $f(1) = p + q$, where $p$ and $q$ are prime numbers. Then $f$ can be written as the sum of two irreducibles of $\mathbb{N}_0[x^{\pm 1}]$.
\end{cor}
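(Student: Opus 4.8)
The plan is to reduce the statement to Lemma~\ref{lemma: f(1) is prime} by splitting $f$ into two summands, one evaluating to $p$ at $x = 1$ and the other to $q$. Writing $f = \sum_{i=0}^{n} c_i x^{k_i}$, I note that $f(1) = \sum_{i=0}^{n} c_i = p + q$. If I can find $g = \sum_{i=0}^{n} g_i x^{k_i}$ and $h = \sum_{i=0}^{n} h_i x^{k_i}$ in $\mathbb{N}_0[x^{\pm 1}]$ with $g_i + h_i = c_i$ for every $i$, and with $g(1) = p$ and $h(1) = q$, then Lemma~\ref{lemma: f(1) is prime} immediately yields that both $g$ and $h$ are irreducible, and $f = g + h$ is the desired representation.

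To construct the splitting, I would choose nonnegative integers $g_i \in \llbracket 0, c_i \rrbracket$ subject to $\sum_{i=0}^{n} g_i = p$ and then set $h_i \coloneqq c_i - g_i$, so that $\sum_{i=0}^{n} h_i = (p+q) - p = q$. The existence of such a choice is the only point that needs justification: since each $g_i$ independently ranges over the contiguous interval $\llbracket 0, c_i \rrbracket$, the set of attainable values of the sum $\sum_{i=0}^{n} g_i$ is exactly the interval $\llbracket 0, \sum_{i=0}^{n} c_i \rrbracket = \llbracket 0, p+q \rrbracket$. As $0 \leq p \leq p+q$, the target $p$ lies in this interval, so a suitable tuple $(g_0, \ldots, g_n)$ exists; concretely, one can fill the coefficients greedily, increasing the $g_i$ one index at a time until the running total first reaches $p$.

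Finally, I would verify that $g$ and $h$ are genuine (nonzero) elements of $\mathbb{N}_0[x^{\pm 1}]$ after discarding the terms carrying a zero coefficient. Because $p$ and $q$ are primes, we have $p, q \geq 2$, so $g(1) = p > 0$ and $h(1) = q > 0$ force each of $g$ and $h$ to have at least one positive coefficient; hence neither summand is the zero polynomial. With $g(1) = p$ and $h(1) = q$ both prime, Lemma~\ref{lemma: f(1) is prime} shows that $g$ and $h$ are irreducible, which completes the argument. I do not anticipate any serious obstacle: the entire content lies in the elementary observation that a prescribed total $p \leq f(1)$ can always be carved out of the coefficients of $f$, and the primality of $p$ and $q$ enters only through Lemma~\ref{lemma: f(1) is prime}.
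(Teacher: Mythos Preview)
Your proof is correct and is exactly the argument the paper has in mind: the corollary is stated without proof immediately after Lemma~\ref{lemma: f(1) is prime}, so the intended reasoning is precisely to carve off coefficients summing to $p$ and apply the lemma to each summand. Your writeup simply makes explicit the coefficient-splitting step that the paper leaves to the reader.
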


Simply examining the coefficients of a polynomial does not seem to be enough to tackle Theorem~\ref{theorem: our main result}. In fact, if we want to write polynomials of the form $\sum_{i = 0}^n x^{k_i}$ as the sum of two irreducibles, then we need to come up with irreducibility criteria that consider the exponents of a polynomial rather than the coefficients. This is precisely the purpose of the following lemmas and definitions up until Lemma~\ref{lemma: splitting configuration}. 

\begin{definition}
	Following~\cite{chapmanpolo}, we say that a nonzero polynomial $f \in \mathbb{N}_0[x^{\pm 1}]$ is \emph{monolithic} if $f = gh$ implies that either $g$ or $h$ is a monomial of $\mathbb{N}_0[x^{\pm 1}]$.
\end{definition}

Next we describe the relation between monolithic and irreducible polynomials in $\mathbb{N}_0[x^{\pm 1}]$.

\begin{lemma} \label{lemma: monolithic and gcd 1 implies irreducibility}
	Let $f = \sum_{i = 0}^{n} c_ix^{k_i} \in\mathbb{N}_0[x^{\pm 1}]$ such that $|\supp(f)| > 1$. Then $f$ is irreducible in $\mathbb{N}_0[x^{\pm 1}]$ if and only if $f$ is monolithic and $\gcd(c_0, \ldots, c_n) = 1$. 
\end{lemma}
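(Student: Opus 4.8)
The plan is to prove both implications directly from the definitions, letting the semiring structure of $\mathbb{N}_0[x^{\pm 1}]$ do most of the work. The one fact I will lean on repeatedly is that the units of $\mathbb{N}_0[x^{\pm 1}]$ are exactly the monic monomials $x^k$, so that a monomial $c\,x^k$ is a unit precisely when $c = 1$.

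For the forward direction, I would assume $f$ is irreducible. Monolithicity is then immediate: in any factorization $f = gh$ one of the factors is a unit $x^k$, and a unit is in particular a monomial, so $f$ is monolithic. To see that $\gcd(c_0, \ldots, c_n) = 1$, I would argue by contradiction. If $d \coloneqq \gcd(c_0, \ldots, c_n) \geq 2$, then writing $f = d \cdot \big( \sum_{i=0}^n (c_i/d)\, x^{k_i} \big)$ exhibits a factorization of $f$ into the constant $d$ and a Laurent polynomial whose support still has more than one element. Neither factor is a unit, since $d \geq 2$ is not a monic monomial and the second factor is not a monomial because $|\supp(f)| > 1$, contradicting irreducibility.

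For the reverse direction, I would assume $f$ is monolithic with $\gcd(c_0, \ldots, c_n) = 1$ and take an arbitrary factorization $f = gh$. Monolithicity forces one factor, say $g$, to be a monomial $c\,x^k$ with $c \in \mathbb{N}$. Comparing coefficients, $c$ divides every coefficient of $f$, hence $c \mid \gcd(c_0, \ldots, c_n) = 1$, so $c = 1$ and $g = x^k$ is a unit. Thus every factorization of $f$ has a unit factor, i.e.\ $f$ is irreducible.

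The argument is short, and the only point demanding care is the correct identification of the units of $\mathbb{N}_0[x^{\pm 1}]$: because coefficients are required to be positive integers, a constant $d \geq 2$ fails to be a unit, which is exactly what makes the hypothesis $\gcd(c_0,\ldots,c_n) = 1$ necessary in the statement. This is the subtlety distinguishing the semiring setting from the usual ring-theoretic one, where scaling by a nonzero constant would be invisible; I expect it to be the only place where one could slip, rather than a genuine obstacle.
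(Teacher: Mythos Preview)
Your proposal is correct and follows essentially the same approach as the paper's proof. The only difference is cosmetic: the paper dispatches the forward implication in a single sentence by pointing to $\mathbb{N}_0[x^{\pm 1}]^{\times} = \{x^k \mid k \in \mathbb{Z}\}$, whereas you spell out both consequences (monolithicity and $\gcd = 1$) explicitly, and the reverse implication is argued identically in both.
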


\begin{proof}
	The direct implication follows from the fact that $\mathbb{N}_0[x^{\pm 1}]^{\times} = \{x^k \mid k \in \mathbb{Z}\}$. Suppose now that $f$ is monolithic and $\gcd(c_0, \ldots, c_n) = 1$. Let $g, h \in \mathbb{N}_0[x^{\pm 1}]$ such that $f = gh$. Since $f$ is monolithic, there is no loss in assuming that $g$ is a monomial. Thus we can write $g = cx^k$ for some $c \in \mathbb{N}$ and $k \in \mathbb{Z}$. Since $\gcd(c_0, \ldots, c_n) = 1$, we have that $c = 1$. Therefore the element $g$ is a unit of $\mathbb{N}_0[x^{\pm 1}]$ which, in turn, implies that $f$ is irreducible.
\end{proof}

Observe that, according to Lemma~\ref{lemma: monolithic and gcd 1 implies irreducibility}, a monolithic polynomial $f$ of degree $d$ is ``almost irreducible" in the sense that its only factors are either constants or polynomials of degree $d$. We now introduce a notion that is going to play an important role in the proof of Theorem~\ref{theorem: our main result}.

\begin{definition} \label{def: hyper-monolithic}
	A polynomial $f = \sum_{i = 0}^{n} c_ix^{k_i} \in\mathbb{N}_0[x^{\pm 1}]$ is called \emph{hyper-monolithic} provided that $|\supp(f)| > 1$ and either $k_0 - k_1 < k_i - k_{i + 1}$ for every $i \in \llbracket 1,n - 1 \rrbracket$ or $k_{n - 1} - k_n < k_j - k_{j + 1}$ for every $j \in \llbracket 0,n - 2 \rrbracket$.
\end{definition}

We now show that hyper-monolithic polynomials are, in fact, monolithic.

\begin{lemma} \label{lemma: monolithic if smallest gap occured at the final position}
	If a polynomial $f \in\mathbb{N}_0[x^{\pm 1}]$ is hyper-monolithic, then $f$ is monolithic.
\end{lemma}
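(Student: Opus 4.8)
The plan is to reduce the statement to a purely additive fact about sumsets of integers. Writing $f = gh$ with $g,h \in \mathbb{N}_0[x^{\pm 1}]$, the first observation is that, because all coefficients involved are positive, no cancellation can occur upon multiplication; hence $\supp(f) = \supp(g) + \supp(h)$, the sumset $\{s + t : s \in \supp(g),\ t \in \supp(h)\}$. It therefore suffices to show the following additive claim: if $S, T \subseteq \mathbb{Z}$ are finite with $|S|, |T| \geq 2$, then the sumset $K = S + T$ can have neither its largest gap strictly smaller than every other gap, nor its smallest gap strictly smaller than every other gap. Granting this, if $f$ is hyper-monolithic and $f = gh$, then one of $\supp(g), \supp(h)$ must be a singleton; that is, $g$ or $h$ is a monomial, which is precisely what monolithicity requires.

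Since the map $x \mapsto x^{-1}$ reverses the order of the exponents, it interchanges the two conditions of Definition~\ref{def: hyper-monolithic} while preserving both factorizations and monomials; so I would assume without loss of generality that $f$ satisfies the first condition, namely that its top gap $k_0 - k_1$ is strictly smaller than every remaining gap $k_i - k_{i+1}$. Write $S = \supp(g) = \{s_0 > s_1 > \cdots\}$ and $T = \supp(h) = \{t_0 > t_1 > \cdots\}$, and set $a = s_0 - s_1$ and $b = t_0 - t_1$; exchanging $g$ and $h$ if necessary (which is harmless, as $f$ is symmetric in them), I would assume $a \leq b$. A short computation then locates the top two exponents of $f$: the maximum of $K$ is $s_0 + t_0 = k_0$, and every other sum $s_i + t_j$ is at most $k_0 - a$, with equality at $s_1 + t_0$, so that $k_1 = k_0 - a$ and the top gap equals $a$.

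The crux is to produce another gap of $K$ that is at most $a$ yet sits strictly below the top, contradicting the first hyper-monolithic condition. For this I would consider the two elements $p = s_0 + t_1$ and $q = s_1 + t_1$ of $K$, which satisfy $p - q = a$. Since $q = s_1 + t_1 < s_1 + t_0 \leq k_1 < k_0$, the exponent $q$ lies strictly below both $k_0$ and $k_1$, so it occupies some position $r \geq 2$ in the descending list of exponents; in particular the gap immediately above $q$ is one of the non-top gaps $k_{r-1} - k_r$ with $r - 1 \geq 1$. Because $p \in K$ and $q < p$, the exponent immediately above $q$ is at most $p$, whence this gap is at most $p - q = a = k_0 - k_1$. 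This contradicts the assumption that the top gap is strictly smaller than every other gap, and the argument concludes.

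The one delicate point—and the reason the naive ``four corners'' picture with exponents $k_0,\ k_0 - a,\ k_0 - b,\ k_0 - a - b$ does not work directly—is that $p$ and $q$ need not be consecutive exponents of $f$, since other sums $s_i + t_j$ may intrude into the open interval $(q, p)$. The argument above sidesteps this entirely by using only that the gap directly above $q$ is at most $p - q$, which holds irrespective of any such intrusion; reconciling this intrusion is exactly the step I expect to be the main obstacle on a first attempt. Finally, I would note that the degenerate case $|\supp(f)| = 2$ requires no separate treatment, since the three distinct exponents $k_0 > k_1 > q$ already force $|\supp(f)| \geq 3$.
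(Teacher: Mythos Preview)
Your argument is correct and follows essentially the same route as the paper's proof: both identify the extremal gap of $f$ with the smaller of the two end-gaps of the factors (your $a=s_0-s_1$, the paper's $t_{m-1}-t_m$), then shift by the other factor's end-gap to exhibit a second pair of support elements at distance $a$ lying strictly away from the extreme, forcing a non-extremal gap of size at most $a$ and yielding the contradiction. The only differences are cosmetic---you work at the top end and phrase everything in terms of the sumset $\supp(f)=\supp(g)+\supp(h)$, while the paper works at the bottom end directly with exponents---and your final step (bounding the gap immediately above $q$ by $p-q$) is in fact slightly cleaner than the paper's interval argument.
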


\begin{proof}
	Write $f = \sum_{i = 0}^{n} c_ix^{k_i}$. We know that either $k_0 - k_1 < k_i - k_{i + 1}$ for every $i \in \llbracket 1,n - 1 \rrbracket$ or $k_{n - 1} - k_n < k_j - k_{j + 1}$ for every $j \in \llbracket 0,n - 2 \rrbracket$. Since the proofs of both cases are symmetric, we may assume that $k_{n - 1} - k_n < k_j - k_{j + 1}$ for every $j \in \llbracket 0,n - 2 \rrbracket$. Suppose towards a contradiction that $f = gh$, where neither $g$ nor $h$ is a monomial. Then write $g =\sum_{i=0}^md_ix^{t_i}$ and $h=\sum_{i=0}^{\ell}e_ix^{r_i}$ for positive integers $m, \ell \in \mathbb{N}$. Note that $k_n = t_m + r_{\ell}$. Without loss of generality, assume that the inequality $t_{m-1} -t_m \leq r_{\ell-1} - r_{\ell}$ holds. This implies that $k_{n - 1} = t_{m - 1} + r_{\ell}$. Thus,
	\[
	t_{m-1} - t_m = (t_{m-1} + r_{\ell}) - (t_m + r_{\ell}) = k_{n-1} - k_n.
	\] 
	Since $k_{n-1} \leq t_m +  r_{\ell-1} < t_{m-1} + r_{\ell-1}$, there exists an index $j\in [\![0, n-2]\!]$ such that $$k_j - k_{j+1} \leq t_{m-1} - t_m = k_{n-1} - k_n.$$ This contradiction proves that our hypothesis is untenable. Hence $f$ is monolithic.
\end{proof} 

Note that, given a hyper-monolithic polynomial $f \in \mathbb{N}_0[x^{\pm 1}]$, we can subtract as many as $f(1) - 2$ (multiplicative) units from $f$ without altering its hyper-monolithic nature, which justifies our use of the term ``hyper-monolithic" for these particular polynomials. Let us illustrate this with an example.

\begin{example} \label{ex: illustrating moving units}
	Consider the hyper-monolithic polynomial $f = 4x^7 + 3x^2 + x$. It is not hard to see that, for every $m, k \in \mathbb{N}_0$ with $0 \leq m \leq 4$ and $0 \leq k \leq 2$, we have that the polynomial $f' = (4 - m)x^7 + (3 - k)x^2 + x$ is hyper-monolithic. In other words, we can subtract up to six units from the polynomial $f$, and the remainder is still hyper-monolithic.
\end{example}

We now have gathered enough irreducibles and ``almost irreducibles" to take on Theorem~\ref{theorem: our main result}. Our attention turns to the task of writing a polynomial $f \in \mathbb{N}_0[x^{\pm 1}]$ as the sum of two irreducibles. Our approach involves initially writing $f$ as the sum of a polynomial $h$ and a hyper-monolithic polynomial $g$. Subsequently, we will shift some summands from $g$ to $h$, compelling $h$ to become irreducible. 

\begin{lemma}\label{lemma: splitting configuration}
	Let $\Delta_0, \ldots, \Delta_n$ be a sequence of positive integers with $n \geq 1$. There exist indices $\alpha, \beta \in \llbracket 0, n \rrbracket$ such that the following conditions hold:
	\begin{enumerate}
		\item $\alpha < \beta$;
		\item the inequality $\Delta_j > \max(\Delta_{\alpha}, \Delta_{\beta})$ holds for $j \in \llbracket 0, \alpha - 1 \rrbracket \cup \llbracket \beta + 1, n \rrbracket$;
		\item the inequality $\Delta_t < \max(\Delta_{\alpha}, \Delta_{\beta})$ holds for at most one index $t \in \llbracket \alpha, \beta \rrbracket$.
	\end{enumerate}
\end{lemma}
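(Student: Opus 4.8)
The plan is to produce the pair $(\alpha, \beta)$ explicitly from a single well-chosen threshold, rather than through an ad hoc case analysis. The guiding picture is that the window $\llbracket \alpha, \beta \rrbracket$ should be exactly the span of the ``small'' terms of the sequence: everything lying strictly outside the window must exceed $\max(\Delta_\alpha, \Delta_\beta)$ (this is the second condition), while inside the window we are permitted a single term that dips below this maximum (the third condition). So the whole argument reduces to selecting the correct cut-off value $M$ and then taking $\alpha$ and $\beta$ to be the first and last indices whose value does not exceed $M$.

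Concretely, I would define
\[
M \coloneqq \min\bigl\{ v \in \mathbb{Z} : |\{ i \in \llbracket 0,n \rrbracket : \Delta_i \le v \}| \ge 2 \bigr\},
\]
which exists because taking $v = \max_i \Delta_i$ already captures all $n + 1 \ge 2$ indices. I would then set $T \coloneqq \{ i \in \llbracket 0,n\rrbracket : \Delta_i \le M \}$ and define $\alpha \coloneqq \min T$ and $\beta \coloneqq \max T$. By construction $|T| \ge 2$, so $\alpha < \beta$, which gives the first condition.

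The two cardinality facts driving everything are: first, $|T| \ge 2$ (directly from the definition of $M$); and second, $|\{ i : \Delta_i < M \}| \le 1$. The latter is exactly where minimality of $M$ and integrality of the $\Delta_i$ come together: since the $\Delta_i$ are integers, $\Delta_i < M$ is equivalent to $\Delta_i \le M - 1$, and minimality of $M$ forces $|\{ i : \Delta_i \le M - 1 \}| \le 1$. From the second fact, the two endpoints cannot both have value strictly below $M$, so at least one of $\Delta_\alpha, \Delta_\beta$ equals $M$ while both are at most $M$; hence $\max(\Delta_\alpha, \Delta_\beta) = M$. The second condition now follows because any index $j$ outside $\llbracket \alpha, \beta\rrbracket$ lies outside $T$ (as $T \subseteq \llbracket \alpha,\beta\rrbracket$), whence $\Delta_j > M = \max(\Delta_\alpha, \Delta_\beta)$. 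The third condition follows because every index $t \in \llbracket \alpha,\beta\rrbracket$ with $\Delta_t < \max(\Delta_\alpha,\Delta_\beta) = M$ is in particular an index with $\Delta_t < M$, and there is at most one such index in the whole sequence by the second cardinality fact.

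I expect the only real obstacle to be locating this threshold in the first place: one must simultaneously make the window wide enough to contain two distinct endpoints and narrow enough in value that at most one interior term falls below the cut-off. Choosing $M$ as the least value admitting two indices at or below it resolves this tension at once, and the integrality of the sequence is precisely what converts ``fewer than two indices strictly below $M$'' into the clean bound ``at most one''. It is worth noting that this single choice silently subsumes what would otherwise be two cases---whether the minimum of the sequence is attained once or at least twice---since $M$ equals the minimum exactly when that minimum is repeated, and equals the second-smallest value otherwise.
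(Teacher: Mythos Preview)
Your proof is correct and, once unpacked, selects exactly the same indices $\alpha,\beta$ as the paper: the paper splits into the cases ``minimum repeated'' versus ``minimum unique'' and uses the minimum or the second-smallest value as threshold accordingly, whereas your single definition of $M$ collapses these two cases into one (as you yourself observe at the end). The arguments are thus essentially the same, with yours packaged more uniformly.
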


\begin{proof}
	We split our reasoning into the following two cases.
	
	\smallskip
	\noindent\textsc{Case 1:} $\min(\Delta_i)_{0 \leq i \leq n}$ occurs more than once in the sequence $\Delta_0, \ldots, \Delta_n$. In this case, let $\alpha \in \llbracket 0,n \rrbracket$ be the smallest index such that $\Delta_{\alpha} = \min(\Delta_i)_{0 \leq i \leq n}$, and let $\beta \in \llbracket 0,n \rrbracket$ be the largest index such that $\Delta_{\beta} = \min(\Delta_i)_{0 \leq i \leq n}$. It is easy to see that conditions $1$, $2$, and $3$ immediately follow.
	
	\smallskip
	\noindent\textsc{Case 2:} $\min(\Delta_i)_{0 \leq i \leq n}$ occurs once in the sequence $\Delta_0, \ldots, \Delta_n$. Let $\alpha \in \llbracket 0,n \rrbracket$ be the smallest index such that $\Delta_{\alpha}$ is either the smallest or the second smallest element of the sequence $\Delta_0, \ldots, \Delta_n$. Similarly, let $\beta \in \llbracket 0,n \rrbracket$ be the largest index such that $\Delta_{\beta}$ is either the smallest or the second smallest element of the same sequence. It is not hard to see that conditions $1$ and $2$ hold. Now observe that $\max(\Delta_{\alpha}, \Delta_{\beta})$ is precisely the second smallest element of the sequence $\Delta_0, \ldots, \Delta_n$. Consequently, there exists exactly one index $t \in \llbracket 0,n \rrbracket$ for which $\Delta_t <  \max(\Delta_{\alpha}, \Delta_{\beta})$, from which condition $3$ readily follows.
\end{proof}

We are rounding the final turn!

\begin{lemma} \label{lemma: f as a sum of a monolithic plus something with smaller size}
	Let $f\in\mathbb{N}_0[x^{\pm1}]$ satisfying that $|\supp(f)| \geq 3$. Then $f$ can be written as the sum of two polynomials $g, h \in \mathbb{N}_0[x^{\pm 1}]$ such that $g$ is hyper-monolithic and $h(1) \leq g(1)$.
\end{lemma}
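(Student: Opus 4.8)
The plan is to reduce the statement to a purely combinatorial selection problem and then feed the gap sequence of $f$ into Lemma~\ref{lemma: splitting configuration}. Write $f = \sum_{i=0}^n c_i x^{k_i}$ with $k_0 > \cdots > k_n$; since $|\supp(f)| \geq 3$ we have $n \geq 2$. I would look for a subset of indices $S \subseteq \llbracket 0,n\rrbracket$ enjoying the two properties that $f|_S \coloneqq \sum_{i \in S} c_i x^{k_i}$ is hyper-monolithic and that $\sum_{i \in S} c_i \geq \tfrac{1}{2} f(1)$. Granting such an $S$, I would set $g = f|_S$ and $h = f - g = \sum_{i \notin S} c_i x^{k_i}$; then $h \in \mathbb{N}_0[x^{\pm 1}]$, and $h(1) = f(1) - g(1) \leq g(1)$, exactly as required.

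To produce $S$, consider the gap sequence $\Delta_i \coloneqq k_i - k_{i+1}$ for $i \in \llbracket 0, n-1\rrbracket$, a string of $n \geq 1$ positive integers, and apply Lemma~\ref{lemma: splitting configuration} to obtain indices $\alpha < \beta$. The point is that $f|_S$ is hyper-monolithic precisely when, listing the gaps of $S$ in order, the first or the last of them is the strict minimum, and conditions (2) and (3) are tailored to engineer such an end-gap. Assuming without loss of generality $\Delta_\alpha \leq \Delta_\beta$: if $\Delta_\alpha < \Delta_\beta$, then the single-exception clause of (3) forces every $\Delta_j$ with $j > \alpha$ to exceed $\Delta_\alpha$, so the suffix $S = \llbracket \alpha, n\rrbracket$ has first gap $\Delta_\alpha$ strictly minimal and $f|_S$ is hyper-monolithic. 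When $\Delta_\alpha = \Delta_\beta$ (the minimum is attained at both ends), I would instead anchor a strict minimum using condition (2): the short suffix $\llbracket \beta, n\rrbracket$ has first gap $\Delta_\beta$ strictly below all later gaps, while symmetrically the short prefix $\llbracket 0, \alpha+1\rrbracket$ has last gap $\Delta_\alpha$ strictly below all earlier gaps, each yielding a hyper-monolithic piece.

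The hard part is coupling this gap bookkeeping with the mass inequality: the hyper-monolithic piece must be the \emph{heavier} of the two, and a window selected purely from the gaps carries no control over the coefficients. The obstacle is sharpest when the coefficient mass concentrates on a dense block of exponents whose consecutive gaps are all minimal, since keeping many consecutive terms of such a block destroys the strict end-gap. I would resolve this by orienting the anchor toward whichever side carries more mass and, inside a dense block, retaining only every other exponent: the interior gaps then strictly exceed the minimal end-gap used to anchor, while the heavier of the two residue classes still retains at least half of the block's mass. Adjoining a single adjacent pair at the chosen end (which only adds mass) restores the strictly smallest end-gap, so the thinned support remains hyper-monolithic in the sense of Definition~\ref{def: hyper-monolithic}. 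Combining the splitting-lemma location of the minimal gaps with this thinning should produce a hyper-monolithic support $S$ with $\sum_{i \in S} c_i \geq \tfrac{1}{2} f(1)$, completing the construction.
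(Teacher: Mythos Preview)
Your reduction to a support-selection problem and your invocation of Lemma~\ref{lemma: splitting configuration} are both correct, and the suffix and prefix pieces you write down are genuinely hyper-monolithic. The gap is precisely where you flag it: none of your concrete candidates is shown to carry at least half of $f(1)$. In the strict case $\Delta_\alpha < \Delta_\beta$ you only offer the suffix $\llbracket \alpha, n\rrbracket$; in fact the prefix $\llbracket 0,\alpha+1\rrbracket$ is hyper-monolithic by the same reasoning (last gap $\Delta_\alpha$, strictly smaller than all earlier gaps by condition~(2)), and since these two together cover $\llbracket 0,n\rrbracket$ the heavier one works---so that case is easily repaired. The real obstruction is $\Delta_\alpha = \Delta_\beta$: here your two candidates $\llbracket 0,\alpha+1\rrbracket$ and $\llbracket \beta,n\rrbracket$ leave the middle block $\llbracket \alpha+2,\beta-1\rrbracket$ uncovered, and inside $\llbracket \alpha,\beta\rrbracket$ every gap may equal $\Delta_\alpha$, so neither piece can be extended inward without losing the strict end-gap. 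If the coefficient mass concentrates on that middle block, neither candidate is heavy enough. Your closing paragraph correctly diagnoses this and gestures at an alternating ``thinning,'' but gives no construction; ``should produce'' is not a proof.

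The idea you are missing, and which the paper uses, is to decouple the gap problem from the mass problem by building \emph{two} hyper-monolithic index sets $S_g, S_h$ with $S_g \cup S_h = \llbracket 0,n\rrbracket$; then one of $f|_{S_g}$, $f|_{S_h}$ automatically has value at $1$ at least $f(1)/2$, regardless of where the coefficients sit, and that one is taken as $g$. Concretely, the paper anchors the pair $\{\alpha,\alpha+1\}$ in $S_g$ and the pair $\{\beta,\beta+1\}$ in $S_h$, attaches the outer tails $\llbracket \beta+2,n\rrbracket$ and $\llbracket 0,\alpha-1\rrbracket$ respectively, and splits the middle block $\llbracket \alpha+2,\beta-1\rrbracket$ by parity between the two sets. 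Each consecutive gap in either piece coming from the middle block is then a sum of at least two of the $\Delta_i$ and hence strictly exceeds the single end-gap $\Delta_\alpha$ (resp.\ $\Delta_\beta$). Your thinning intuition is exactly this parity trick; what you lacked was the realization that the two residue classes, each glued to one anchor and one tail, yield two hyper-monolithic pieces that jointly cover every index.
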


\begin{proof}
	Write $f=\sum_{i=0}^nc_ix^{k_i}$. Consider the sequence of positive integers defined by $\Delta_0 \coloneqq k_0 - k_1, \ldots, \Delta_{n - 1} \coloneqq k_{n - 1} - k_n$, and let $\alpha,\beta\in \llbracket 0,n - 1 \rrbracket$ be indices such that conditions 1, 2, and 3 in the statement of Lemma~\ref{lemma: splitting configuration} hold (with respect to the sequence $\Delta_0, \ldots, \Delta_{n - 1}$). Additionally, let us consider the multiset $S = \{\Delta_i \mid i \in \llbracket 0, n - 1 \rrbracket\}$. Set 
	\[
	S_g \coloneqq \{\alpha, \alpha + 1\}  \cup \left(\llbracket \alpha + 2, \beta - 1 \rrbracket \cap \{\alpha + 2z + 1 \mid z \in \mathbb{N}\} \right) \cup \llbracket \beta + 2,n \rrbracket
	\]
	and 
	\[
	S_h \coloneqq \llbracket 0, \alpha - 1 \rrbracket \cup \left(\llbracket \alpha + 2, \beta - 1 \rrbracket \cap \{\alpha + 2z \mid z \in \mathbb{N}\}\right) \cup  \{\beta, \beta + 1\}.
	\]
	Note that $S_g$ and $S_h$ are not necessarily disjoint. Indeed, the index $\beta$ may be equal to $\alpha + 1$. Now if $\beta - \alpha$ is an odd number larger than $1$ and $\Delta_{\beta - 1} \leq \Delta_{\beta}$, then we remove $\beta + 1$ from $S_h$ and add it to $S_g$. Now consider the polynomial $g=\sum_{i=0}^{n}d_ix^{k_i}$ such that
	\begin{equation*}
		d_i =
		\begin{cases}
			c_i & i\in S_g,\\
			0 & \text{otherwise.}
		\end{cases}       
	\end{equation*}
	After removing the terms with coefficient $0$, we can rewrite $g=\sum_{i=0}^m e_ix^{t_i}$, where the coefficients $e_0,\ldots,e_m$ are positive integers and the exponents $t_0, \ldots, t_m$ satisfy $t_0> \cdots > t_m$. It is easy to see that $t_0 = k_{\alpha}$ and $t_1 = k_{\alpha+1}$. Moreover, for an arbitrary index $j\in \llbracket 1,m - 1 \rrbracket$, we have that $t_j - t_{j+1}$ is either the sum of at least two elements of $S$, one of which is greater than or equal to $\Delta_{\alpha}$, or is strictly greater than the difference $\Delta_{\alpha}$. Consequently, the inequality $t_0 - t_1 < t_j - t_{j+1}$ holds for every $j\in \llbracket 1,m - 1 \rrbracket$. Therefore the polynomial $g$ is hyper-monolithic. Similarly, consider the polynomial $h = \sum_{i = 0}^n a_ix^{k_i}$ such that 
	\begin{equation*}
		a_i =
		\begin{cases}
			c_i & i\in S_h,\\
			0 & \text{otherwise.}
		\end{cases}       
	\end{equation*}
	As before, we can rewrite $h = \sum_{i = 0}^{\ell} b_i x^{r_i}$, where the coefficients $b_0, \ldots, b_{\ell}$ are positive integers and the exponents $r_0, \ldots, r_{\ell}$ satisfy $r_0 > \cdots > r_{\ell}$. It is not hard to see that $r_{\ell-1} - r_{\ell} \leq \Delta_{\beta}$. Moreover, for an arbitrary index $j\in \llbracket 0,\ell-2 \rrbracket$, we have that $r_j - r_{j+1}$ is either the sum of at least two elements of $S$, one of which is greater than or equal to $\Delta_{\beta}$, or is strictly greater than the difference $\Delta_{\beta}$. Consequently, the inequality $r_{\ell - 1} - r_{\ell} < r_{j} - r_{j + 1}$ holds for every $j\in \llbracket 0,\ell-2 \rrbracket$. This implies that $h$ is also hyper-monolithic. Since $S_g \cup S_h = \llbracket 0,n \rrbracket$, we have that either $g(1) \geq \frac{f(1)}{2}$ or $h(1) \geq \frac{f(1)}{2}$ as, otherwise, the inequality $f(1) \leq g(1) + h(1) < f(1)$ would hold, but this is impossible. Consequently, we have that either $g(1) \geq (f - g)(1)$ or $h(1) \geq (f - h)(1)$, from which our argument concludes.
\end{proof}

We are now in a position to show that most polynomials $f \in \mathbb{N}_0[x^{\pm 1}]$ can be written as the sum of two irreducible polynomials.

\begin{lemma} \label{lemma: prime in the Nagura interval}
	Every $f\in \mathbb{N}_0[x^{\pm 1}]$ can be written as the sum of two irreducible polynomials provided that $|\supp(f)|\geq 3$ and $\frac{5f(1)}{6}-1\leq p\leq f(1)-2$ for some $p \in \mathbb{P}$. 
\end{lemma}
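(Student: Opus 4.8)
The plan is to combine the decomposition from Lemma~\ref{lemma: f as a sum of a monolithic plus something with smaller size} with the unit-shifting mechanism illustrated in Example~\ref{ex: illustrating moving units}. Applying Lemma~\ref{lemma: f as a sum of a monolithic plus something with smaller size} (which is legitimate since $|\supp(f)| \geq 3$), I would write $f = g + h$ with $g$ hyper-monolithic and $h(1) \leq g(1)$; in particular $g(1) \geq \frac{f(1)}{2}$. I would then transfer $p - h(1)$ units from $g$ into $h$, producing polynomials $g', h' \in \mathbb{N}_0[x^{\pm 1}]$ with $f = g' + h'$, $h'(1) = p$, and $g'(1) = f(1) - p$. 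Since $h'(1) = p$ is prime, Lemma~\ref{lemma: f(1) is prime} makes $h'$ irreducible for free, so the entire difficulty is concentrated in arranging that $g'$ be irreducible as well. By Lemmas~\ref{lemma: monolithic if smallest gap occured at the final position} and~\ref{lemma: monolithic and gcd 1 implies irreducibility}, it suffices to keep $g'$ hyper-monolithic while forcing the $\gcd$ of its coefficients to equal $1$.

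Before worrying about irreducibility, I would confirm that the transfer is arithmetically legal. The number of units moved, $p - h(1)$, is nonnegative because $p \geq \frac{5f(1)}{6} - 1 \geq \frac{f(1)}{2} \geq h(1)$ (the existence of a prime $p$ with $p \leq f(1) - 2$ forces $f(1) \geq 4$, which validates the middle inequality). Moreover $g'(1) = f(1) - p \geq 2$ since $p \leq f(1) - 2$, and this is exactly the slack needed so that, by the remark preceding Example~\ref{ex: illustrating moving units}, one may remove $p - h(1) \leq g(1) - 2$ units from $g$ without destroying its hyper-monolithic character. Thus $g'$ can be kept hyper-monolithic and retains at least two terms.

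The main obstacle is achieving $\gcd = 1$ for $g'$ simultaneously with hyper-monolithicity, and this is where the lower bound on $p$ does real work. Writing $T := g'(1) = f(1) - p$, the hypothesis $p \geq \frac{5f(1)}{6} - 1$ forces $T \leq \frac{f(1)}{6} + 1$, which is small next to $g(1) \geq \frac{f(1)}{2}$. Since $g$ is hyper-monolithic, its minimal gap occurs between two consecutive exponents sitting at one of the two ends of its support; I would retain exactly those two exponents in $g'$ and obtain any remaining terms by dropping other exponents, which only merges and hence enlarges the other gaps, so the minimal gap---and with it hyper-monolithicity---is preserved. Because $T$ is so small relative to $g(1)$, there is ample room to select a sub-collection of the units of $g$ totaling $T$ and supported on these two end-exponents (spreading any surplus onto interior exponents only if a single end term lacks capacity) so that one retained coefficient equals $1$; this forces the $\gcd$ of the coefficients of $g'$ to be $1$. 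Consequently $g'$ is monolithic with coprime coefficients, hence irreducible, and $f = g' + h'$ exhibits $f$ as a sum of two irreducibles. I expect the bookkeeping in this final selection step---verifying in each capacity regime that some coefficient can be pinned to $1$---to be the only genuinely fiddly part of the argument.
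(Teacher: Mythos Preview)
Your plan is correct and matches the paper's argument: decompose via Lemma~\ref{lemma: f as a sum of a monolithic plus something with smaller size}, then peel $p - h(1)$ units off the hyper-monolithic summand $g$ so that one piece evaluates to $p$ (hence is irreducible by Lemma~\ref{lemma: f(1) is prime}) while the other remains hyper-monolithic with a coefficient equal to $1$. The paper organizes your ``fiddly'' selection step slightly differently---splitting off the case $|\supp(g)| = 2$, and when $|\supp(g)| \geq 3$ first reducing the \emph{minimum} coefficient $c_j \leq g(1)/3$ of $g$ down to $1$ (so that $h(1) + (c_j - 1) \leq \tfrac{f(1)}{2} + \tfrac{f(1)}{3} - 1 = \tfrac{5f(1)}{6} - 1 \leq p$ is immediate)---which gives a concrete device for the bookkeeping you anticipated.
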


\begin{proof}
	By virtue of Lemma~\ref{lemma: f as a sum of a monolithic plus something with smaller size}, we can write $f = g + h$ for polynomials $g, h \in \mathbb{N}_0[x^{\pm 1}]$ such that $g$ is hyper-monolithic and $h(1) \leq g(1)$. Let us assume that $|\supp(g)| = 2$. Now write $g = ax^r + bx^s$, where $a, b \in \mathbb{N}$ and $r,s \in \mathbb{Z}$. Suppose without loss of generality that $a \geq b$. Thus,
	\[
	f(1) - h(1) - (b - 1) = a + 1 \geq \frac{g(1)}{2} + 1 \geq \frac{f(1)}{4} + 1,
	\]  
	which implies that
	\[
	h(1) + (b - 1) \leq \frac{3f(1)}{4} - 1 <  \frac{5f(1)}{6} - 1 \leq p.
	\]
	Note that then we can write
	\[
	f = \left[ h + (b - 1)x^s + (p - h(1) - b + 1)x^r \right] + \left[ (a - p + h(1) + b - 1)x^r + x^s \right],
	\]
	where the first summand between brackets is irreducible by Lemma~\ref{lemma: f(1) is prime} and the second summand between brackets is irreducible because $p \leq f(1) - 2$ (thus the coefficients of the second summand are positive). For the rest of the proof, we may assume that $|\supp(g)| \geq 3$. Write $g=\sum_{i=0}^nc_ix^{k_i}$, where the coefficients $c_0, \ldots, c_n$ are positive integers and the exponents $k_0, \ldots, k_n$ satisfy $k_0 > \cdots > k_n$. Take $j \in \llbracket 0,n \rrbracket$ such that $c_j = \min \{c_i \mid i \in \llbracket 0,n \rrbracket\}$. Since $|\supp(g)| \geq 3$, we have $c_j \leq \frac {g(1)}3$. Thus,
	\[
	h(1)+(c_j - 1)\leq\frac{f(1)}2+\frac{f(1)}3-1 = \frac{5f(1)}{6} - 1\leq p.
	\]
	Let $h' = h + (c_j - 1)x^{k_j}$, and let $g' = g - (c_j - 1)x^{k_j}$. It is easy to see that $f = h' + g'$, where $h'(1) \leq p$ and $g'$ is hyper-monolithic (as $g$ is hyper-monolithic) and irreducible by Lemma~\ref{lemma: monolithic and gcd 1 implies irreducibility} and Lemma~\ref{lemma: monolithic if smallest gap occured at the final position}. As we previously pointed out (see Example~\ref{ex: illustrating moving units} and the preceding comment), we can subtract up to $g'(1) - 2$ multiplicative units from $g'$ without affecting its hyper-monolithicness or irreducibility. Consequently, we can write $g' = g^* + g_*$, where $g^*(1) = p - h(1) - c_j + 1$ and $g_*$ is irreducible. It should be noted that such a representation is possible because $p \leq f(1) - 2$. Thus,
	\[
	f = \left[ h + (c_j - 1)x^{k_j} + g^* \right] + \left[ g_* \right],
	\] 
	where the first summand between brackets is irreducible by Lemma~\ref{lemma: f(1) is prime}.
\end{proof}

Bertrand's postulate states that, for all natural numbers $n \geq 1$, there exists a prime number $p \in \mathbb{P}$ such that $n < p < 2n$. Building upon this result, Nagura~\cite{nagura} showed that, for all natural numbers $n\geq 25$, there exists $p \in \mathbb{P}$ such that $n < p < \frac{6n}{5}$. Using this refinement of Bertrand's postulate along with the previous lemmas, we can now prove Theorem~\ref{theorem: our main result}. 

\begin{proof}[Proof of Theorem~\ref{theorem: our main result}]
	Let $f$ be a polynomial of $\mathbb{N}_0[x^{\pm 1}]$ satisfying that $f(1) > 3$ and $|\supp(f)| > 1$. If $|\supp(f)| = 2$, then we can write $f = ax^r + bx^s$, where $a, b \in \mathbb{N}$ and $r,s \in \mathbb{Z}$. There is no loss in assuming that $a \geq b$. In this case, it is not hard to argue that $f$ can be expressed as the sum of two irreducible polynomials. In fact, if $b = 1$, then $a \geq 3$ and, as such, $f$ can be expressed as the sum of $(a - 2)x^{r}+x^{s}$ and $2x^{r}$, which are both irreducibles. On the other hand, if $b > 1$, then $f$ can be expressed as the sum of the irreducibles $(a - 1)x^{r}+x^{s}$ and $x^{r}+(b - 1)x^{s}$.
	
	For the rest of the proof, we may assume that $|\supp(f)|\geq 3$. For now, let us also assume that $f(1) \geq 32$. Let $p$ be the largest prime number less than $f(1)$. Then we have $p \geq 31$, which implies that $p < f(1) < \frac{6p}{5}$. Thus $\frac{5f(1)}{6} < p < f(1)$. If $p \leq f(1) - 2$, then, by Lemma~\ref{lemma: prime in the Nagura interval}, our argument concludes. On the other hand, if $p = f(1) - 1$, then we can take $p'$ to be the largest prime less than $p$. Notice that $p' \geq 29$. As before, we have $p' < p <\frac{6p'}{5}$ which, in turn, implies that $\frac56(f(1) - 1) < p' < f(1) - 1$. Then our result follows from Lemma~\ref{lemma: prime in the Nagura interval}.

	To complete the proof, it remains to establish that our result applies to polynomials $f \in \mathbb{N}_0[x^{\pm 1}]$ satisfying $f(1) < 32$. We can assume that $f(1) \in \{11, 17, 23, 27, 29\}$. Indeed, since every number in the set $\llbracket 4,31 \rrbracket \setminus \{11, 17, 23, 27, 29\}$ can be expressed as the sum of two prime numbers, our result for these values follows from Corollary~\ref{cor: goldbach for numbers}. Now write $f=\sum_{i=0}^nc_ix^{k_i}$, where the coefficients $c_0, \ldots, c_n$ are positive integers. We split our argument into the following two cases.
	
	\smallskip
	\noindent\textsc{Case 1:} $f(1) \in \{11, 17, 23, 27\}$. In this case, we can write $f(1) = p + 4$ for some prime number $p \in \mathbb{P}$. If there exists some coefficient $c_j$ with $j \in \llbracket 0,n \rrbracket$ satisfying that $c_j \geq 3$, then we let $g = 3x^{k_j}+x^{k_t}$, where $t \in \llbracket 0,n \rrbracket \setminus \{j\}$. It is not hard to see that both $g$ and $f-g$ are irreducibles of $\mathbb{N}_0[x^{\pm 1}]$. Otherwise, for all $i\in \llbracket 0,n \rrbracket$, we have $c_i\leq 2$. Consequently, the inequality $n \geq 6$ holds. Then let $g = x^{k_{i_1}} + x^{k_{i_2}} + x^{k_{i_3}} + x^{k_{i_4}}$, where $k_{i_1} > k_{i_2} > k_{i_3} > k_{i_4}$ and $k_{i_1} - k_{i_2} \neq k_{i_3} - k_{i_4}$. As the reader can verify, the polynomial $g$ is either irreducible or the multiplication of two binomials, but the latter case is impossible as $k_{i_1} - k_{i_2} \neq k_{i_3} - k_{i_4}$. Hence $g$ and $f - g$ are both irreducibles, which concludes our argument for this case. 
	
	\smallskip
	\noindent\textsc{Case 2:} $f(1) = 29$. Notice that $f(1) = 23 + 6$. Consequently, if there exists some coefficient $c_j$ with $j \in \llbracket 0,n \rrbracket$ satisfying that $c_j\geq 5$, then we let $g = 5x^{k_j}+x^{k_t}$, where $t \in \llbracket 0,n \rrbracket \setminus \{j\}$. Since both $g$ and $f-g$ are irreducibles, our argument concludes. Otherwise, for all $i\in [\![0,n]\!]$, we have $c_i\leq 4$. By Lemma~\ref{lemma: f as a sum of a monolithic plus something with smaller size}, there exist polynomials $g, h \in \mathbb{N}_0[x^{\pm 1}]$ such that $f = g + h$, the polynomial $g$ is hyper-monolithic, and $h(1) \leq g(1)$. Let us write $g=\sum_{i=0}^rd_ix^{t_i}$, and let $m \in \llbracket 0,r \rrbracket$ such that $d_m = \min\{d_i \mid i \in \llbracket 0,r \rrbracket\}$. Clearly, the inequality $d_m \leq 4$ holds. Now write $g=g^*+(d_m-1)x^{t_m}$, and notice that $g^*$ is hyper-monolithic and irreducible by Lemma~\ref{lemma: monolithic and gcd 1 implies irreducibility} and Lemma~\ref{lemma: monolithic if smallest gap occured at the final position}. Moreover, we have that 
	\[
	g^*(1) = g(1) - (d_m - 1) = 29 - h(1) - (d_m - 1),
	\]
	while $h(1) + (d_m - 1) \leq 17$ as $h(1) \leq 14$ and $d_m \leq 4$. This implies that we can write $g^* = g' + g''$, where the polynomial $g'$ is hyper-monolithic and irreducible and $g''(1) = 17 - h(1) - (d_m -1)$. We therefore can write
	\[
	f=\left[h+g''+(d_m-1)x^{t_m}\right]+\left[g'\right],
	\]
	where each summand between brackets is irreducible. 
\end{proof}

With the horses back in the barn, we can now offer an example that illustrates how to use the results established in this paper to write a Laurent polynomial with positive integer coefficients as the sum of two irreducibles.

\begin{example}
	Consider the Laurent polynomial
	\[
	f=6x^7+3x^4+4x^3+7x+5x^{-1}+3x^{-4}+8x^{-6}\!
	\]  
	with positive integer coefficients. We let $S'$ denote the set consisting of the two smallest differences between consecutive exponents of $f$. Hence $S' = \{1, 2\}$. Observe that $3x^4 + 4x^3$ is the left-most binomial satisfying that the difference between its exponents is an element of $S'$; on the other hand, $3x^{-4} + 8x^{-6}$ is the right-most binomial satisfying the same property. Using these delimiters and following Lemma~\ref{lemma: f as a sum of a monolithic plus something with smaller size}, we can now split $f$ into two hyper-monolithic summands (see Definition~\ref{def: hyper-monolithic} above), namely
	\begin{equation*}
		g = 3x^4 + 4x^3 + 5x^{-1} \hspace{.3 cm}\text{ and } \hspace{.3 cm} h = 6x^7 + 7x + 3x^{-4} + 8x^{-6}.
	\end{equation*}
	Observe that the polynomial $h$ is irreducible by virtue of Lemma~\ref{lemma: monolithic and gcd 1 implies irreducibility} and Lemma~\ref{lemma: monolithic if smallest gap occured at the final position}. Moreover, subtracting $x^{-6}$ from $h$ does not affect its irreducibility (see Example~\ref{ex: illustrating moving units} and the preceding comment). Thus, we can write
	\[
	f = \left[ 3x^4 + 4x^3 + 5x^{-1} + x^{-6} \right] + \left[ 6x^7 + 7x + 3x^{-4} + 7x^{-6} \right]\!,
	\]
	where the first summand between brackets is also irreducible by Lemma~\ref{lemma: f(1) is prime}.
\end{example}

While the Goldbach conjecture is exceptionally challenging to verify, we have shown that an analogue for Laurent polynomials with positive integer coefficients is highly tractable. In fact, we have not only proven this result using elementary methods but have also provided an algorithm for writing a Laurent polynomial with positive integer coefficients as the sum of two irreducibles.

\bigskip
\section*{Acknowledgments}
The authors would like to thank anonymous referees and the editorial board of the American Mathematical Monthly for comments and suggestions that helped improve an early version of this paper. The authors would also like to extend their gratitude to the MIT PRIMES program for making this collaboration possible. While working on this paper, the second author was generously supported by the University of California President's Postdoctoral Fellowship.

\bigskip

\end{document}